\newtheorem{lem}{Lemma}
\newenvironment{sketch-proof}{{\it Sketch of the Proof:\ }}{ \hfill}
\newcommand{\R}{{\mathbb R}}
\renewcommand{\Pr}{{\mathbb{P}}}
\newcommand{\diag}{{\text{diag}}}
\newcommand{\cov}{{\text{cov}}}
\newcommand{\figref}[1]{Figure~\ref{#1}}
\title{On Projection-Based Model Reduction of Biochemical Networks\\ Part II: The Stochastic Case}
\author{Aivar Sootla$^{1}$ and James Anderson$^{2}$
\thanks{$^{1}$AS is with the Department of Bioengineering, Imperial College London, UK {\tt\small a.sootla@imperial.ac.uk}}%
\thanks{$^{2}$JA is with St. John's College and the Department of Engineering Science, University of Oxford, Parks Road, OX1 3JP {\tt\small james.anderson@eng.ox.ac.uk}}%
}
\begin{document}
\maketitle
\thispagestyle{empty}
\pagestyle{empty}
\begin{abstract}
In this paper, we consider the problem of model order reduction of stochastic biochemical networks. In particular, we reduce the order of (the number of equations in) the Linear Noise Approximation of the Chemical Master Equation, which is often used to describe biochemical networks. In contrast to other biochemical network reduction methods, the presented one is projection-based. Projection-based methods are powerful tools, but the cost of their use is the loss of physical interpretation of the nodes in the network. In order alleviate this drawback, we employ structured projectors, which means that some nodes in the network will keep their physical interpretation. For many models in engineering, finding structured projectors is not always feasible; however, in the context of biochemical networks it is much more likely as the networks are often (almost) monotonic. To summarise, the method can serve as a trade-off between approximation quality and physical interpretation, which is illustrated on numerical examples. 
\end{abstract}
\begin{keywords}
  model order reduction; structured model order reduction; stochastic averaging principle; linear noise approximation; chemical master equation
\end{keywords}
\section{Introduction and Preliminaries}
In mathematical biology, one of the most common approaches to model reduction of deterministic dynamical systems is to apply Tikhonov's theorem~\cite{Tikhonov1952}, this is also referred to as time-scale separation or the quasi-steady state assumption. The framework based on time-scale separation has received considerable attention in the control theory community~\cite{kokotovic1987singular}. However, projection-based methods (see, e.g.~\cite{Moore1981, Scherpen93}) have now become the preferred method. This shift occurred because the latter proved itself more powerful than the former. In any projection-based method first an appropriate state-space transformation is computed and only then tools similar to time-scale separation are applied. Having the freedom to apply such transformations results in the following advantages: existence of error bounds (in some cases), automatic selection of reduced states, higher approximation quality. 

A technique similar to time-scale separation have been also developed for Stochastic Differential Equations (SDEs) and known as the called stochastic averaging principle~\cite{khas1968aver_c}.In recent work~\cite{hartmann2011balanced} the averaging principle was derived for linear SDEs, while using a projection approach~\cite{Moore1981}.

In the context of  biochemical networks, we are dealing not with an SDE, but with a partial differential equation of probability distributions, the so-called Chemical Master Equation (CME). A solution to this equation is a continuous-time Markov chain with an infinite number of states. To the authors' best knowledge there are no tractable time-scale separation or averaging methods applicable to CMEs. Hence reduced order modelling is typically applied to different approximations of the CME such as Finite State Projection~\cite{munsky:044104, pahlajani2011stochastic}, Linear Noise Approximation (LNA)~\cite{thomas2012rigorous} or macroscopic deterministic models of reaction networks~\cite{rao2012model,sunnaaker2011method}. 
As in~\cite{thomas2012rigorous}, we will compute reduced order models for the LNA, which is a collection of two systems: a deterministic one describing macroscopic concentrations and an SDE driven by a Gaussian noise describing the fluctuations about the macroscopic concentrations. Our approach to reduction is a combination of averaging and projection. The focus of this paper is towards the stochastic averaging methods, in Part I \cite{SooA14} the details of the structured projectors are given and proofs included\footnote{Whilst both papers are self contained the first paper is available on line at {\color{blue}http://arxiv.org/abs/1403.3579}}.

Applying a state-space transformation results in the states of the reduced order model which are composed of linear combinations of the states of the full order model. Hence the new states lack physical interpretation and sparsity patterns in the network are lost, which constitutes a major drawback of the projection-based methods. In order to limit the loss of physical interpretation, we use structured model order reduction~\cite{SandStrucRed}. This means that the states are partitioned into two groups: one group of states is left intact, and the other states are linear combinations of some original states. The transformations are computed according to the partitioning based on the covariance matrix of fluctuations about the macroscopic concentrations of the model. This covariance matrix approximates the statistics of the solution to the CME under certain conditions~\cite{wallace2012linear}. Therefore, the reduced order model can potentially capture additional information about the stochastic nature of the model in comparison with other approaches.

The idea of using structured transformations in order to preserve the physical meaning of the states is not new. Moreover, the class of models, for which such transformations can be computed, is not rich and not many necessary conditions for their existence are known. However, in the context of biochemical networks for the systems with monotone dynamics (for definition see \cite{angeli2003monotone}) such transformations can always be computed. 

In this paper, we notice that the proposed method can be applied to a more general class of networks. Many biochemical networks have a tendency to be nearly monotone~\cite{sontag2007monotone}, that is if a small number of edges are removed, then the network becomes monotone. (In Part I \cite{SooA14} we formally define the concept of monotonicity.)
Since the monotonicity implies the existence of structured projectors, one can make a conjecture on the existence of such projectors for a large class of biochemical networks. To illustrate this observation, we apply the method to a model of yeast glycolysis~\cite{van2012testing}, which becomes monotone after removing just three reactions (out of more than thirty). For this model, structured projectors exist for any state-space partitioning, which is a very strong property.

The paper is organised as follows. In the next subsections, we introduce the CME, LNA and stochastic averaging. In Section~\ref{s:mor}, the proposed model reduction method is described, which is validated on numerical examples in Section~\ref{s:ex}.
\subsection{Modelling Biochemical Networks}
Biochemical networks are typically modelled by a continuous time infinite Markov chain, probability distribution function of which is computed by a Chemical Master Equation (CME):
\begin{equation}
  \label{eq:master}
  \frac{\partial \Pr(n, t)}{\partial t} = \Omega \sum_{i=1}^R  (\hat f (n - S_{i}, \Omega) - \hat f(n, \Omega)) \Pr(n, t),
\end{equation}
where $R$ is the number of reactions; 
column vectors $S_{i}$ form a stoichiometry matrix $S$; $\hat f$ is a vector containing the reaction rates $\hat f_i$; $n$ is a vector containing the number of molecules $n_j$ of species $j$; $\Omega$ is a volume of a compartment where reactions are occurring; finally, $\Pr(n,t)$ is the probability of the vector of the number of molecules equal to $n$ at time $t$. This equation cannot be solved analytically except for a handful of cases and numerical simulations are extremely expensive. In order to lower the complexity of simulations, different approximations of a CME are often derived, for example, the Linear Noise Approximation (LNA)~\cite{van2007stochastic}. 
%
The major assumption in the LNA is as follows:
\[
\frac{n}{\Omega} = x + \Omega^{-1/2} \eta,
\]
where $x$ is a vector of macroscopic concentrations of the species, $\eta$ is a vector of stochastic fluctuations about $x$. Additionally, if we assume a large number of reactions occurring per unit time, it can be shown, that the fluctuations $\eta$ and macroscopic concentrations $x$ obey the following equations: 
\begin{align}
\label{eq:lna-fla}   &\dot \eta = J(x) \eta + \Omega^{-1/2} S F(x) \Gamma, \\
\label{eq:det-dyn}   &\dot x = S f(x), 
\end{align}
where $f(x)$ is approximately equal to $\hat f(n,\Omega)$ for a large volume $\Omega$, $\Gamma$ is a Gaussian noise, $J$ is a Jacobian of $S f(x)$, and $F = {\diag}(\sqrt{f(x)}))$. Note that the matrices $J$, $F$ do not depend on the fluctuations $\eta$, but depend only on the macroscopic concentrations $x$, which is computed using deterministic differential equations. The covariance, $X$, of $\eta$ is computed by solving (or simulating) the Lyapunov differential equation~\cite{grima2010effective}:
\begin{equation}\label{eq:cov-lna}
  J X + X J^T + \Omega^{-1} S F^2 S = \frac{d X}{d t}.
\end{equation}
As a final remark, note that the solution to \eqref{eq:lna-fla},\eqref{eq:det-dyn} is a vector with elements $y_i(t)$, which at every time $t$ are normally distributed with mean equal to $x_i(t)$, variance equal to $X_{i i}(t)$ and covariances $\cov(y_i(t) y_j(t))$ equal to $X_{i j}(t)$~\cite{wallace2012linear}. 

The macroscopic reaction rates are not affected at all by the fluctuations dynamics, hence the time-scale separation can be applied directly to \eqref{eq:det-dyn}. In order to reduce the order of the fluctuation dynamics, we apply a version of the averaging principle. The result from~\cite{thomas2012rigorous} is presented here for completeness. Consider the system:
\begin{equation}\label{eq:fastslow}
  \begin{aligned}
   \dot \eta_s&= J_{s s} \eta_s + J_{s f} \eta_f + S_s F \Gamma, \\
   \varepsilon \dot \eta_f&= J_{f s} \eta_s + J_{f f} \eta_f + S_f F \Gamma,     
  \end{aligned}
\end{equation}
where 
\[
J = \begin{pmatrix} J_{s s} & J_{f s} \\ J_{s f} & J_{f f} \end{pmatrix} \quad
S = \begin{pmatrix} S_s \\ S_f \end{pmatrix}
\]
Note that the $s$ and $f$ subscripts denote slow and fast states respectively. Then the reduced order linear noise approximation of the fluctuations corresponding to the slow dynamics $\eta_s$ can be obtained as follows:
\begin{equation}
\label{eq:lna-fla_red}
   \dot \eta_s = (J_{s s} - J_{s f} J_{f f}^{-1} J_{f s}) \eta_f + (S_s - J_{s f}J_{f f}^{-1} S_f) F\Gamma .
\end{equation}
 As a final remark, we note that the authors~\cite{thomas2012rigorous} did not explicitly use a Tikhonov-like theorem, but derived the \eqref{eq:lna-fla_red} using projection operator theory applied to a corresponding Focker-Plank equation. This can serve as a justification for the use of time-scale separation techniques in the context of stochastic differential equations driven by white noise as in \eqref{eq:lna-fla_red}. 

\section{Model Reduction Methods \label{s:mor}}

In this paper, it is proposed to employ stochastic averaging coupled with a projection approach in order to obtain a reduced order model. First, a particular state-space transformation $T$ will be applied to species $n$ in CME~\eqref{eq:master} resulting in the new species $m$ such that $m = T n$. The transformation $T$ is computed based on the covariance matrix of $\eta$. 
Finally, we simply apply averaging as described above in order to reduce the fast species in $m$ and obtain approximate dynamics of the macroscopic concentration of species and fluctuations about the macroscopic concentrations. Additionally, we assume that the fast species in $m$ converge to a unique stationary distribution, otherwise we need to consider so called double averaging~\cite{wainrib2012double}.
\subsection{CME under a State-Space Transformation}
First, we show that  based on the solution of the CME in the species $m$, it is possible to reconstruct the solution in the species $n$. 
\begin{lem}
Assume the variable $n$ satisfies the Master equation:
\[
 \frac{\partial \Pr(n, t)}{\partial t} = \Omega \sum_{j=1}^R (\hat f (n - S_{i}, \Omega) - \hat f(n, \Omega)) \Pr(n, t).
\]
Then the variable $m = T n$ satisfies the following equation:
\[   
 \frac{\partial \Pr(m, t)}{\partial t} = \Omega \sum_{j=1}^R(\tilde f (m - \tilde S_{i}, \Omega) - \tilde f(m, \Omega)) \Pr(m, t),
\]
where $T$ is an invertible constant matrix, $\tilde S= T S$, $\tilde f(m, \Omega) = f(T^{-1} m, \Omega)$.  
\end{lem}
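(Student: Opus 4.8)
The plan is to treat the statement as a change of variables in the master equation, exploiting the fact that $T$ is constant and invertible, so that the map $n \mapsto m = T n$ is a time-independent bijection. First I would establish the relation between the two probability distributions. Since $n = T^{-1} m$, observing the transformed state $m$ is the same event as observing $n = T^{-1} m$; passing to densities introduces only the constant Jacobian factor, so that $\Pr(m, t) = |\det T|^{-1}\Pr(T^{-1} m, t)$. The crucial point is that this factor is independent of $t$, hence it commutes with the time derivative appearing on the left-hand side.

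Next I would rewrite the right-hand side of the original equation entirely in terms of $m$. Using $n = T^{-1} m$ together with the definitions $\tilde S_i = T S_i$ and $\tilde f(\cdot, \Omega) = \hat f(T^{-1}\cdot, \Omega)$, the two rate terms become $\hat f(n, \Omega) = \hat f(T^{-1} m, \Omega) = \tilde f(m, \Omega)$ and, for the shifted argument, $\hat f(n - S_i, \Omega) = \hat f(T^{-1}(m - T S_i), \Omega) = \tilde f(m - \tilde S_i, \Omega)$. This is the step that makes the structure of the CME invariant: a stoichiometric jump of $S_i$ in the $n$-coordinates corresponds exactly to a jump of $\tilde S_i = T S_i$ in the $m$-coordinates.

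Finally I would assemble the pieces. Multiplying the original equation by $|\det T|^{-1}$ turns the left-hand side into $\partial \Pr(m,t)/\partial t$ and, because the same constant factor multiplies $\Pr(n, t)$ on the right, converts $|\det T|^{-1}\Pr(n, t)$ into $\Pr(m, t)$. Combined with the rate identities above, this yields precisely the claimed equation for $m$, with stoichiometry $\tilde S$ and reaction rates $\tilde f$.

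I expect the only genuine subtlety to lie in the density transformation and the Jacobian bookkeeping: $n$ is nominally a vector of molecule counts, so one must argue that for a \emph{constant} $T$ the Jacobian $|\det T|$ is both state- and time-independent, and therefore cancels identically from the two sides rather than contributing a spurious multiplicative term. Everything else reduces to a direct substitution, and no time-scale separation or averaging machinery is required at this stage.
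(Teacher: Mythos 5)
Your proof is correct and follows essentially the same route as the paper: a direct substitution $n = T^{-1} m$ giving $\hat f(n - S_i, \Omega) = \hat f(T^{-1}(m - T S_i), \Omega) = \tilde f(m - \tilde S_i, \Omega)$, combined with identifying the probability law of $m$ with that of $n$. The only cosmetic difference concerns the subtlety you flag: the paper simply asserts $\Pr(n,t) = \Pr(m,t)$, treating $\Pr$ as a probability mass function of the discrete molecule counts under the bijection $n \mapsto T n$ (so no Jacobian ever appears), whereas your constant factor $|\det T|^{-1}$, as you correctly observe, cancels identically from both sides and leads to the same equation.
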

\begin{proof}
Consider the term $(\hat f (n - S_{i}, \Omega) - \hat f(n, \Omega))$ and make a substitution $n = T^{-1} m$. Now
\begin{multline*}
\hat f (n - S_{i}, \Omega) - \hat f(n, \Omega) =   \hat f (T^{-1} m - S_{i}, \Omega) - \\
\hat f(T^{-1} m, \Omega) = \hat f (T^{-1} ( m - T S_{i}), \Omega) - \\
\hat f(T^{-1} m, \Omega) = \tilde f ( m - \tilde S_{i}, \Omega) - \tilde f(m, \Omega)
\end{multline*}
Finally, in order to prove the claim, note that $\Pr(n, t)$ is equal to $\Pr(m,t)$.
\end{proof}
In the next section we derive the structured transformation matrix $T$.
\subsection{Structured Transformations}
 In order to compute a transformation $T$ we require an output $y$ as a function of $x$ according to classical methods~\cite{Moore1981, Scherpen93}. The output in our case can be chosen simply as observations of the states of particular interest. Assume that we are interested in the behaviour of the first $l$ species, then define the matrix $C:=\begin{pmatrix} I_{l} & 0_{l, k} \end{pmatrix}$, where $l+k$ is the total number of species. Now the outputs are chosen as follows: the output of the macroscopic concentrations $y^d$ is equal to $C x$, and the output for the fluctuations $y^f$ is equal to $C \eta$.


We propose to compute a transformation $T$ based on the covariances in a steady-state $x_{s s}$ of \eqref{eq:det-dyn}. Hence, the obtained approximation will be around the steady-state $x_{s s}$. One can also employ techniques for linear time-varying model reduction, such as \cite{san+04}, \cite{nil09dis}. Consider a system 
\begin{equation}
  \label{eq:sys}
  \begin{aligned}
    \dot \nu &=A \nu+B \Gamma\\
         y^f&= C \nu
  \end{aligned}
\end{equation}
where the drift matrix $A = J(x_{s s})$ is asymptotically stable and $B =  \Omega^{-1/2}S F(x_{s s})$.  Assume also that the system \eqref{eq:sys} is partitioned as follows
\begin{equation} \small
  \label{eq:part}
\nu = \begin{pmatrix}  \nu_1 \\ \nu_2 \end{pmatrix}\; 
A =\begin{pmatrix} A_{11} & A_{12} \\ A_{21} & A_{22} \end{pmatrix}\;
B = \begin{pmatrix} B_{1} \\ B_{2} \end{pmatrix}\; 
C^T = \begin{pmatrix}  I_{l} \\ 0_{k,l} \end{pmatrix},
\end{equation}
where $\nu_1 \in {\R}^{l}$, $\nu_2 \in {\R}^{k}$, and the matrices $A$, $B$ and $C$ are partitioned according to the same dimensions. The next step is to compute structured Gramians, which are obtained as solutions to Lyapunov inequalities
\begin{equation}
  \label{eq:lyap_ineq}
  \begin{aligned}
    A P + P A^T + B B^T&\le 0 \\
    Q A + A^T Q + C^T C&\le 0
  \end{aligned}
\end{equation}
subject to the same partitioning as the states:
\begin{equation}
\label{eq:gram_part}
P =\begin{pmatrix} P_{11} & 0_{l,k}  \\ 0_{k, l}  & P_{22} \end{pmatrix} \quad
Q =\begin{pmatrix} Q_{11} & 0_{l,k}  \\ 0_{k, l}  & Q_{22} \end{pmatrix}
\end{equation}
Note that $P$ is an approximation of $X$ from \eqref{eq:cov-lna} around the steady-state $x_{s s}$. Given our assumption about the importance of the first $l$ species, the transformation $T$ is composed as follows:
\[
   T = \begin{pmatrix} I_{l} & 0_{l,k} \\ 0_{l,k} & T_{22} \end{pmatrix}
\]
where $T_{22}$ is such that 
\[
T_{22}^{-1} P_{22} T_{22}^{-T}=T_{22}^T Q_{22} T_{22} =\Sigma_{22},
\]
and $\Sigma_{22}$ is diagonal. According to standard tools~\cite{SandStrucRed}, we choose the states to truncate according to the magnitude of the values of the diagonal of $\Sigma_{22}$. Assume $r$ states are to be reduced, let $W_{2 2}$ be the first $k-r$ columns of $T_{2 2}$, while $W_{2 2}^r$ are the rest $r$ columns of $T_{2 2}$. Let also $V_{2 2}$ be the first $k-r$ columns of $T_{22}^{-1}$, while $V_{2 2}^r$ are the rest $r$ columns of $T_{2 2}^{-1}$. Now, the projectors can be obtained as follows
{ \begin{equation}
  \label{eq:proj}
  \begin{gathered}
   W = \begin{pmatrix} I_{l} & 0_{l,k-r} \\ 0_{k-r, l} & W_{22} \end{pmatrix} \quad   W_r = \begin{pmatrix} 0_{l,r} \\  W_{22}^r \end{pmatrix} \\
   V = \begin{pmatrix} I_{l} & 0_{l,k-r} \\ 0_{k-r, l}  & V_{22} \end{pmatrix} \quad   V_r = \begin{pmatrix} 0_{l,r} \\ V_{22}^r \end{pmatrix}
  \end{gathered}
\end{equation}}
  The existence of block-diagonal solutions to \eqref{eq:gram_part} cannot be guaranteed for general linear systems. It is known, however, that some classes of systems admit block-diagonal solutions to the Lyapunov inequalities. One such class is positive systems, that is systems with $A$ matrix with non-negative off-diagonal entries (these matrices are called Metzler), $B$ and $C$ matrices with non-negative entries. A generalisation of positive systems to the non-linear case are monotone systems. This essentially implies that for monotone systems such Gramians always exist (cf.~\cite{monred2014}). We discuss this further in Part I \cite{SooA14}.

\subsection{Reduced Order Model}
Let $z$ be a new variable equal to $T x$, let also $z_r$ be the species to be removed from the model, and $z_s$ the states of the reduced order model. Now the equations approximating the full order dynamics (\ref{eq:lna-fla}-\ref{eq:det-dyn}) can be computed as follows
\begin{equation}
  \label{eq:red-met-1}
   \begin{aligned}
   \dot \eta_m&= V^T J(W z_m + W_r z_r) W \eta_m + \\
   &\qquad \Omega^{-1/2} V^T S F(W z_m + W_r z_r) W \Gamma_m, \\
   \dot z_m& = V^T S f(W z_m + W_r z_r)  \\
   0& = V_r^T S f(W z_m + W_r z_r) \\
   y_r^d& =  C (W z_m + W_r z_r)\\
   y_r^f& =  C W \eta_m.
 \end{aligned}
\end{equation}
\subsection{Model Comparison}
We assume that the full order model is defined as follows:
\[
 \begin{array}{ll}
   \dot x = S f(x),&    y^{d} = C x \\
   \dot \eta = J(x)\eta + \Omega^{-1/2} S F(x) \Gamma,  &
   y^{f} = C \eta \\
   x(0) = x_0, \quad \eta(0) = 0 & 
 \end{array} 
\]
where $y^d$ is ``the output'' of the macroscopic concentrations, $y^f$ is ``the output'' of the fluctuations with a constant matrix $C$ and $x_0$ is the initial state.
The reduced order models can be written as follows:
\[
 \begin{array}{ll}
   \dot z_m = S_r f_r(z_m),  &   y_r^{d} = C_r z + D_r \\
   \dot \eta_m = J_r(z_m) \eta_m + \Omega^{-1/2} S_r F_r(z_m) \Gamma, &  y_{r}^f = C_r \eta \\
  z_m(0) = V x_0, \quad \eta_m(0) = 0 & 
 \end{array} 
\]
where $S_r = V^T S$, $f_r(z) = f(W z + W_r z_r)$, $F_r(z) = F(W z + W_r z_r)$, $J_r(z) =  V^T J(W z + W_r z_r) W$, $C_r = C W$ and $D_r = C W_r z_r$. 
Note that~\cite{thomas2012rigorous} fits the framework in \eqref{eq:red-met-1} with the identity transformation $T$, and projectors $W$ and $V$ reducing particular states. We will take~\cite{thomas2012rigorous} as a baseline and compare it to the proposed method. 

We compare separately the error in the macroscopic dynamics (mean) and the fluctuations (variance), since their dynamic models are decoupled. The error $y^d-y^d_r$ in macroscopic dynamics is computed by perturbing the initial state $x_0$ from the steady-state $x_{s s}$ and measured in $L_1$, $L_2$ and $L_{\infty}$ norms (which for completeness we now define):
\begin{eqnarray*}
\|u\|_p& =& \left(\int_{-\infty}^{\infty} |u(t)|_p^p dt  \right)^{\frac{1}{p}}\quad \text{for }p\in \{1,2\},\\
\|u\|_{\infty} &=& \text{ess sup}_t |u(t)|_{\infty}.
\end{eqnarray*}
A comparison in terms of the fluctuations $\eta$ is performed by computing the covariance matrix of the outputs $y$ and $y_r$. For the full order model this matrix is computed as 
\[
\cov^f = \cov(y^f (y^f)^T) = C \cov(\eta \eta^T) C^T =  C X C^T
\]
where $X$ satisfies the Lyapunov equation~\eqref{eq:cov-lna}.
Similarly, the covariance matrix for the reduced order models $\cov_r^f$ can be computed.

\section{Examples \label{s:ex}}

\subsection{Toy Example. \label{ex:cov}}
 \begin{figure}[t]
    \centering
   \subfigure[Depiction of the network]{\includegraphics[width=0.33\columnwidth]{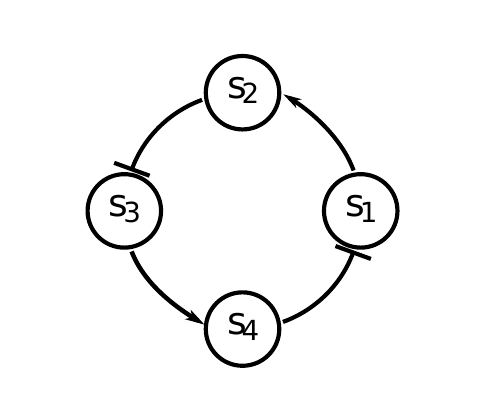} \label{fig:toy-ex}}\qquad
   \subfigure[Configuration for reduction]{\includegraphics[width=0.33\columnwidth]{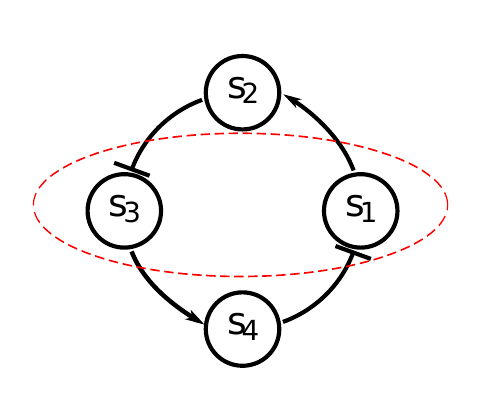} \label{fig:toy-ex-1}}
\caption{Toy Example. In the left panel, the network is schematically depicted. In the right panel, the configuration for reduction is depicted. In this configuration species $S_1$ and $S_3$ are grouped together, while reducing one state}\label{fig:1}
\end{figure}
The first network we consider consists of only four species, see \figref{fig:toy-ex}.  One can interpret the species $S_1$ and $S_3$ as mRNA, and $S_2$ and $S_4$ as the corresponding proteins. We refer the reader to Part I \cite{SooA14} for a full interpretation of the model.

   \begin{align*}
      &\dot m_i = \frac{c_{i 1}}{1 + p_j^2}-c_{i 2} m_i\\
      &\dot p_i = c_{i 3} m_i-c_{i 4} p_i
   \end{align*}
 where $c_{i1}$ are constants, $m_i$ are mRNA concentrations, $p_i$ are protein concentrations and  $i,j\in \{1,2\}$ and $i\neq j$. We compare the simulation results for the full order model, the reduced order model obtained by~\cite{thomas2012rigorous}, and the reduced order model obtained from reduction according to the configuration in \figref{fig:toy-ex-1} with parameters
\[
  c_{1\cdot}= c_{2\cdot}  = \begin{pmatrix} 3 & 4 & 1 & 0.2  \end{pmatrix}.
\]
The method from~\cite{thomas2012rigorous} and the presented method produce very similar deterministic models, simulation of which is depicted in \figref{fig:means}. But the computation of the covariance matrix of the fluctuations $\eta$ paints a different picture. The presented method provided the fluctuations with statistics very close the full order model statistics, which is not the case for the statistics of the model obtained by~\cite{thomas2012rigorous}, see \figref{fig:stoch}. 
The presented reduction method uses the covariance matrices in order to compute the reduced order models. Moreover, the proposed method targets for reduction the species with small variances in the fluctuations about the macroscopic species concentrations. This outlines a major advantage of the proposed method.  

\begin{figure}
\centering
\subfigure[The number of species $S_1$ and $S_3$ according to macroscopic reaction equations. ]{\includegraphics[width=0.45\columnwidth]{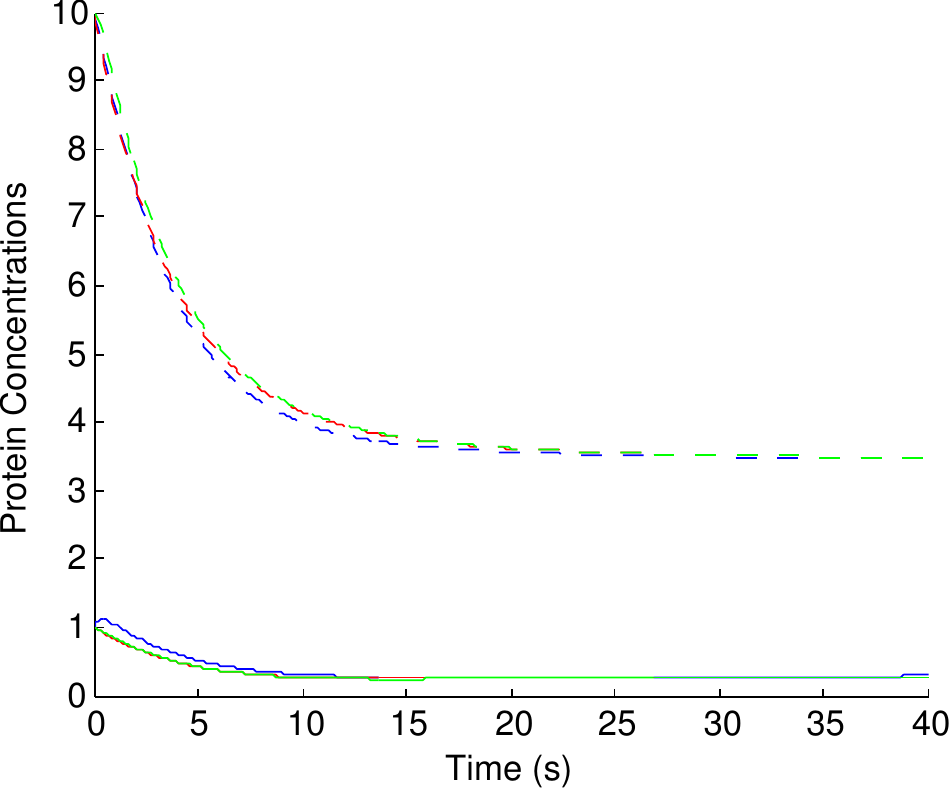} \label{fig:means}}
 \subfigure[The variance of fluctuations in the number of species $S_1$. ]{\includegraphics[width=0.45\columnwidth]{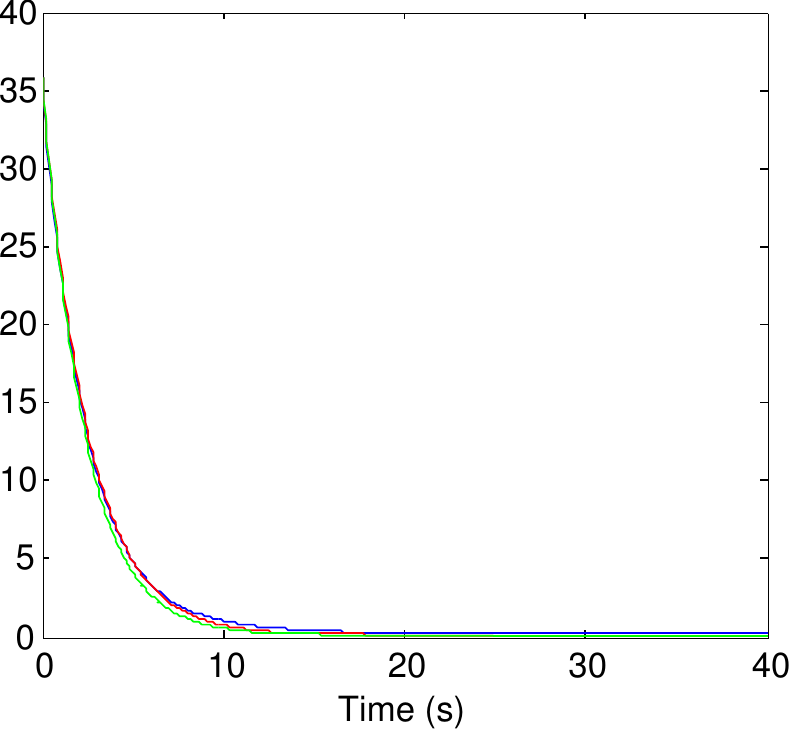} \label{fig:cov11}}

 \subfigure[The covariance in fluctuations in the number of species $S_1$ and $S_3$]{\includegraphics[width=0.45\columnwidth]{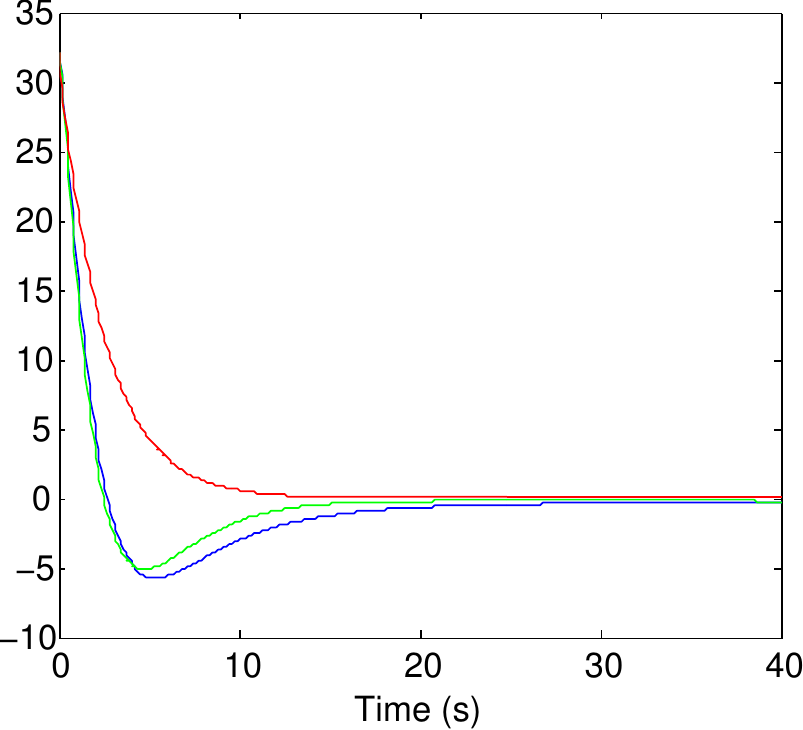} \label{fig:cov21}}
 \subfigure[The variance of fluctuations in the number of species $S_3$]{\includegraphics[width=0.45\columnwidth]{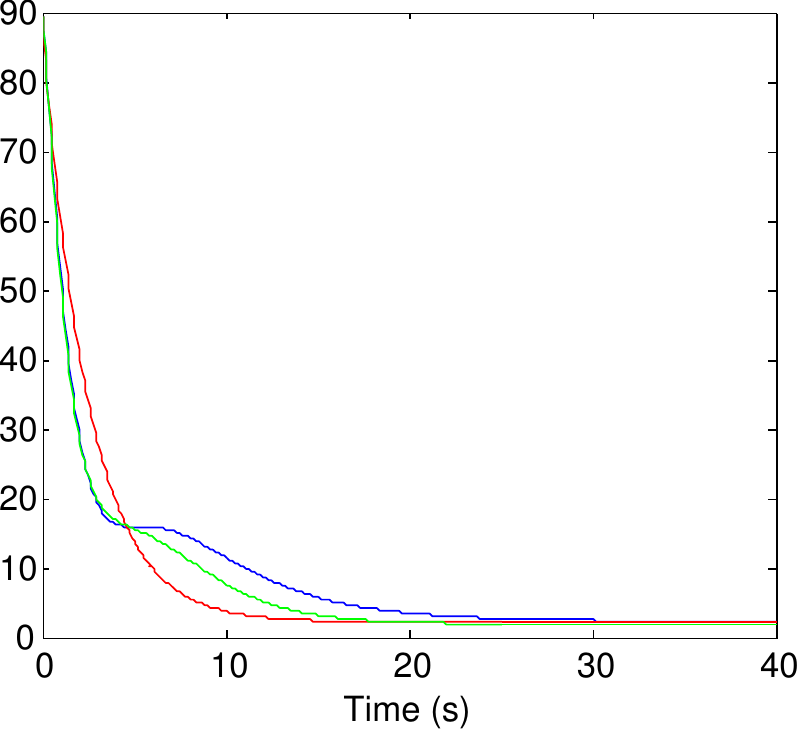} \label{fig:cov22}}

\caption{The number of species $S_1$ and $S_3$ according to macroscopic (deterministic) reaction equations and the covariance matrix of these species. In all figures, the blue lines are obtained by simulating the full order model, the red lines by simulating the model obtained using~\cite{thomas2012rigorous}, and the green lines by simulating the reduced order model with states lumped according to the configuration in \figref{fig:toy-ex-1}. In Figure~\ref{fig:means}  The dashed lines are the number of species of $S_1$, and the solid lines are the number of species of $S_3$. Even though macroscopic concentrations of species are almost the same for all the models, the covariances of the model obtained by~\cite{thomas2012rigorous} are quite different from the full order model covariances.}
\label{fig:stoch}

\end{figure}

\subsection{Kinetic Model of Yeast Glycolysis. Non-Monotone Dynamics \label{ex:gly}}
The model was published in~\cite{van2012testing} and consists of twelve metabolites and four boundary fluxes. We model the network's response to change of glucose in the system as in~\cite{rao2012model}. Again we refer the reader to Part I \cite{SooA14} for a full model description, we point out that the Jacobian of the system is not Metzler but by knocking out one uni-directional and one bi-directional reaction the network is monotone.


Using this fact, it was not a great surprise that a linearised model around a steady-state would have block-diagonal Gramians with a sparsity pattern according to some state partitioning. However, the existence of diagonal Gramians was surprising. This meant that without any reservation we could approximate any group of states, while preserving the other states. 

The simulation results are presented in Table~\ref{tab:gly-red} for various reduction configurations. We compare only the errors in the macroscopic concentrations. We apply~\cite{thomas2012rigorous} to metabolite concentrations, while using the proposed method we try to lump those metabolites in one state, so that the number of reduced states is similar in both cases. The first three rows of each sub-table in Table~\ref{tab:gly-red} can be compared directly, and it is clear that the proposed method performs better in terms of quality than~\cite{thomas2012rigorous}. 

The proposed method is also more flexible in terms of reduction choices. In the forth row of Table~\ref{tab:gly-red}-B, the region \{BPG-PEP\} contains four metabolites; however, we reduced only two states after computing the state-space transformation. In the fifth row, in the region \{GLCi-F6P\}, which contains three metabolites, we reduce just one state and this provides us with the best model among all the reduction attempts. 


\begin{figure}
  \centering
  \includegraphics[width=0.8\columnwidth]{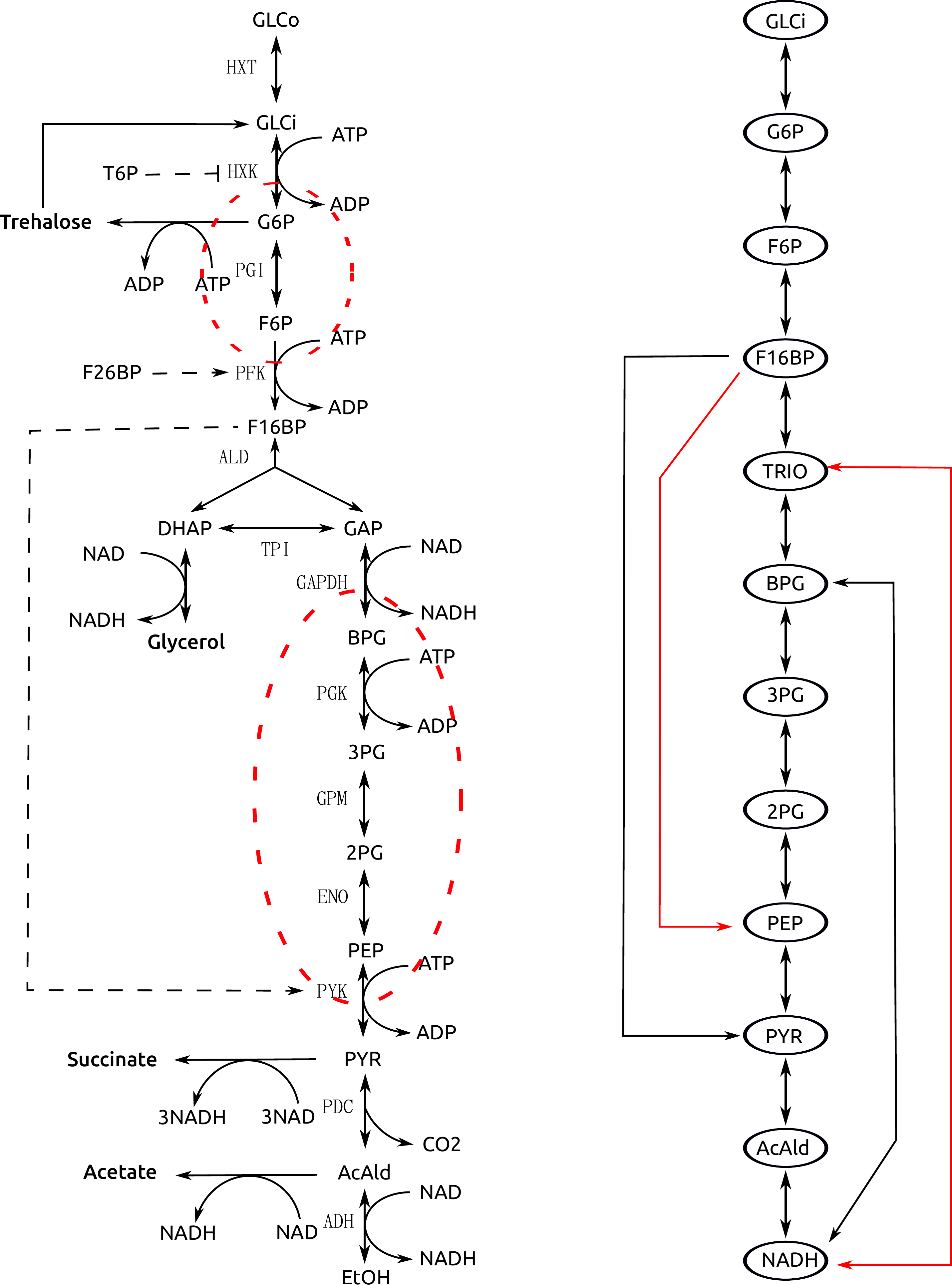} 
  \caption{Kinetic model of yeast glycolysis. In the left panel the biochemical graph is depicted. In the right panel a graph of dynamic interactions between metabolites. If the red connections are removed, the dynamics of the network would become monotone.}\label{fig:glycerol}
\end{figure}
 
\begin{table}
\centering\scriptsize
\caption{Reduction of the glycolysis model. The error of the output is given in different norms. 
} \label{tab:gly-red} 
  \begin{tabular}{cccc}
  \multicolumn{4}{c}{ \sc Table~\ref{tab:gly-red}-A. Appxoimation results obtained by using~\cite{thomas2012rigorous}} \\[6pt]
  States $\backslash$ Error & $L_1$ & $L_2$ & $L_{\infty}$\\
\hline
\hline
  F6P, 2PG, PEP              & $1.21$ & $23.6$ & $0.98$ \\
  F6P, 3PG, 2PG, PEP         & $1.56$ & $33.6$ & $1.55$ \\
  G6P, F6P, 3PG, 2PG, PEP    & $2.05$ & $36.3$ & $1.59$ \\
\hline
  \end{tabular}

 \vspace{6pt}

  \begin{tabular}{ccccc}
  \multicolumn{5}{c}{ \sc Table~\ref{tab:gly-red}-B. Reduction by $\{k_1, k_2\}$ states in every region} \\[6pt]
    Lumped Region(s)         & $\{k_1, k_2\}$  & $L_1$  & $L_2$  & $L_{\infty}$ \\
  \hline
  \hline
  \{G6P, F6P\}, \{2PG-PEP\} & $\{1, 2\}$ & $1.17$ & $24.8$ & $1.01$ \\
  \{G6P, F6P\}, \{BPG-PEP\} & $\{1, 3\}$ & $1.41$ & $22.9$ & $0.86$ \\
  \{GLCi-F6P\}, \{BPG-PEP\} & $\{2, 3\}$ & $1.23$ & $18.7$ & $0.64$ \\
  \{G6P, F6P\}, \{BPG-PEP\} & $\{1, 2\}$ & $1.40$ & $22.8$ & $0.86$ \\
  \{GLCi-F6P\}, \{BPG-PEP\} & $\{1, 2\}$ & $0.03$ & $0.56$ & $0.02$ \\
\hline
  \end{tabular}
\end{table}

\section{Conclusion and Discussion}
This paper provides a proof of concept of using projection-based methods for modelling stochastic biological systems. We illustrate on numerical examples that the proposed method is competitive with time-scale separation in terms of approximation error. In combination with Part 1 \cite{SooA14} we have shown how to construct the structured projectors and shown that for monotone dynamical system we can preserve structure, and (locally) monotonicity.

An additional inquiry is required into existence of structured Gramians for the biochemical networks. In general, diagonal Gramians do not exist for all models; however, block-diagonal ones may be more common. In any case, even the initial intuition provides a great justification for the proposed reduction method. 

Another interesting topic is an extension to non-constant transformations $T$ (non-linear or time-varying). A non-linear structured balancing can be potentially applied to a Chemical Langevin Equation (CLE), which is a more representative approximation of CME in comparison to LNA~\cite{wallace2012linear}. A non-linear transformation $T$ can also potentially address the problem of non-stationary fast variables and double averaging.

An important topic for the proposed model reduction method is partitioning of a network. Some intuition can be gained through biological insights, for example, one can collapse a whole pathway into a couple of states. One can also employ tools from metabolic control analysis~\cite{klipp2008systems}. On the other hand, we can employ purely theoretical tools such as~\cite{delvenne2013stability,sezer1986nested,anderson2012decomposition}. Currently, we are not investigating the problem of partitioning; however, it is an important future work direction.

Finally, an issue to consider is the definition of simplicity of the resulting model. In~\cite{Rewienski}, it is pointed out that many methods, which reduce the order of the model, actually result in a stiff system, which is harder to simulate. 

\section*{Acknowledgment}
The authors would like thank Prof Bayu Jayawardhana and Dr Shodhan Rao for kindly providing the kinetic model of yeast glycolisis. JA acknowledges funding through a junior research fellowship from St. John’s College, Oxford. AS is supported by the EPSRC Science and Innovation Award EP/G036004/1. 
\bibliography{Biblio}
  \end{document}